\documentclass[10pt]{article}
\usepackage[utf8]{inputenc}

\usepackage{amssymb,amsthm,amsmath}
\usepackage{enumerate}
\usepackage{graphicx,color}
\usepackage{setspace}\onehalfspacing
\usepackage[hidelinks]{hyperref}

\newcommand{\dd}{\mathrm{d}}
\newcommand{\E}{\mathbb{E}}
\newcommand{\1}{\textbf{1}}
\newcommand{\R}{\mathbb{R}}

\newcommand{\red}{}

\DeclareMathOperator{\Var}{Var}

\usepackage[paper=a4paper, left=1.5in, right=1.5in, top=1.3in, bottom=1.4in]{geometry}
\pagestyle{plain}

\newtheorem{theorem}{Theorem}
\newtheorem{lemma}[theorem]{Lemma}
\newtheorem{corollary}[theorem]{Corollary}

\theoremstyle{remark}
\newtheorem{remark}[theorem]{Remark}

\newtheorem{conjecture}{Conjecture}

\theoremstyle{definition}

\title{\vspace{-3em}Log-concavity and log-convexity of moments of averages of i.i.d. random variables}
\author{Philip Lamkin\thanks{Carnegie Mellon University; Pittsburgh, PA 15213, USA. Email: plamkin@andrew.cmu.edu}
\ and 
Tomasz Tkocz\thanks{\linespread{1.0} Carnegie Mellon University; Pittsburgh, PA 15213, USA. Email: ttkocz@math.cmu.edu. Research supported in part by the Collaboration Grants from the Simons Foundation.}
}
\date{21st April 2020}

\begin{document}

\maketitle

\begin{abstract}
We show that the sequence of moments of order less than 1 of averages of i.i.d. positive random variables is log-concave. For moments of order at least 1, we conjecture that the sequence is log-convex and show that this holds eventually for integer moments (after neglecting the first $p^2$ terms of the sequence).
\end{abstract}

\bigskip

\begin{footnotesize}
\noindent {\em 2010 Mathematics Subject Classification.} Primary 05A20; Secondary 60E15, 26D15.

\noindent {\em Key words.} log-convexity, log-concavity, moment comparison, sums of independent random variables
\end{footnotesize}

\bigskip

\section{Introduction and results}

Suppose $X_1, X_2, \ldots$ are i.i.d. copies of a positive random variable and $f$ is a nonnegative function. This article is concerned with certain combinatorial properties of the sequence
\begin{equation}\label{eq:def-an}
a_n = \E f\left(\frac{X_1+\dots+X_n}{n}\right), \qquad n = 1, 2, \dots.
\end{equation}
For instance, $f(x) = x^p$ is a fairly natural choice leading to the sequence of moments of averages of the $X_i$. Since we have the identity
\[
\sum_{i=1}^{n+1} x_i = \sum_{i=1}^{n+1}\frac{\sum_{j: j \neq i} x_j}{n},
\]
we conclude that the sequence $(a_n)_{n=1}^\infty$ is nonincreasing when $f$ is convex. What about inequalities involving more than two terms?

Such inequalities have been studied to some extent. One fairly general result is due to Boland, Proschan and Tong from \cite{BPT} (with applications in reliability theory). It asserts in particular that for $n = 2, 3, \ldots$,
\begin{equation}\label{eq:Boland}
\E \phi(X_1+\dots+X_n,X_{n+1}+\dots+X_{2n}) \leq \E \phi(X_1+\dots+X_{n-1},X_{n}+\dots+X_{2n})
\end{equation}
for a symmetric (invariant under permuting coordinates) continuous random vector $X = (X_1,\dots,X_{2n})$ with nonnegative components and a symmetric convex function $\phi: [0,+\infty)^2 \to \R$.

We obtain a satisfactory answer to a natural question of log-convexity/concavity of sequences $(a_n)$ for completely monotone functions, also providing insights into the case of power functions.


Recall that a nonnegative sequence $(x_n)_{n=1}^\infty$ supported on a set of contiguous integers is called log-convex (resp. log-concave) if $x_n^2 \leq x_{n-1}x_{n+1}$ (resp. $x_n^2 \geq x_{n-1}x_{n+1}$) for all $n \geq 2$ (for background on log-convex/concave sequences, see for instance \cite{LW, Stanley}). One of the crucial properties of log-convex sequences is that log-convexity is preserved by taking sums (which follows from the Cauchy-Schwarz inequality, see for instance \cite{LW}). Recall that an infinitely differentiable function function $f: (0,\infty) \to (0,\infty)$ is called completely monotone if we have $(-1)^nf^{(n)}(x) \geq 0$ for all positive $x$ and $n = 1, 2, \ldots$; equivalently, by Bernstein's theorem (see for {\red instance} \cite{Fel}), the function $f$ is the Laplace transform of a nonnegative Borel measure $\mu$ on $[0,+\infty)$, that is
\begin{equation}\label{eq:Lap}
f(x) = \int_0^\infty e^{-tx}\dd\mu(t).
\end{equation}
For example, when $p<0$, the function $f(x) = x^p$ is completely monotone. Such integral representations are at the heart of our first two results.

\begin{theorem}\label{thm:comp-mon}
Let $f\colon (0,\infty)\rightarrow (0,\infty)$ be a completely monotone function. Let $X_1, X_2, \ldots$ be i.i.d. positive random variables. Then the sequence $(a_n)_{n=1}^\infty$ defined by \eqref{eq:def-an} is log-convex.
\end{theorem}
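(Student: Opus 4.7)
The plan is to reduce, via Bernstein's theorem, to the pure exponential case $f(x)=e^{-tx}$ for fixed $t>0$, and then to exploit the log-convexity of Laplace transforms of positive measures.

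First, I would insert the Bernstein representation \eqref{eq:Lap} into the definition of $a_n$ and apply Fubini's theorem to obtain
\[
a_n = \int_0^\infty \E e^{-t(X_1+\dots+X_n)/n}\,\dd\mu(t) = \int_0^\infty g(t/n)^n\,\dd\mu(t),
\]
where $g(s):=\E e^{-sX_1}$ is the Laplace transform of the distribution of $X_1$. Next I would invoke the fact, already mentioned in the excerpt, that a mixture with nonnegative weights of log-convex sequences is log-convex: if $h_n(t)$ is log-convex in $n$ for every fixed $t$, then $h_n(t)\leq \sqrt{h_{n-1}(t)h_{n+1}(t)}$ pointwise, and a single application of the Cauchy--Schwarz inequality against $\dd\mu$ gives $a_n^2 \leq a_{n-1}a_{n+1}$. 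Thus it suffices to establish that for each fixed $t>0$, the sequence $n\mapsto g(t/n)^n$ is log-convex in $n$.

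For this remaining step, I would pass to a continuous variable: setting $L(s)=\log g(s)$ and $\ell(n)=n L(t/n)$, log-convexity of $g(t/n)^n$ in $n$ on the positive integers is implied by $\ell''(n)\geq 0$ on $(0,\infty)$. A direct two-line computation gives
\[
\ell'(n)=L(t/n)-\tfrac{t}{n}L'(t/n), \qquad \ell''(n)=\frac{t^2}{n^3}\,L''(t/n),
\]
so the whole statement reduces to convexity of $L=\log g$ on $(0,\infty)$. But this is the classical log-convexity of Laplace transforms of positive measures; it follows at once from Cauchy--Schwarz in the form $g(s)g(u)\geq g\bigl(\tfrac{s+u}{2}\bigr)^2$ applied to the distribution of $X_1$.

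I do not anticipate any serious obstacle: the argument is a short three-link chain (Bernstein integral representation; preservation of log-convexity under nonnegative mixtures; reduction to convexity of $\log g$), and the only real computation is the short derivative calculation that identifies convexity of $\ell$ with convexity of $\log g$.
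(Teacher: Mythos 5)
Your proposal is correct and follows essentially the same route as the paper: Bernstein's representation plus Fubini, the Cauchy--Schwarz reduction to pointwise log-convexity of $u_n(t)=g(t/n)^n$ for each fixed $t$, and then a verification of that pointwise log-convexity. The only divergence is in the last step: the paper gets $u_n(t)^2\leq u_{n-1}(t)u_{n+1}(t)$ from a one-line H\"older inequality applied to $e^{-tX_1/n}=\bigl(e^{-tX_1/(n-1)}\bigr)^{\frac{n-1}{2n}}\bigl(e^{-tX_1/(n+1)}\bigr)^{\frac{n+1}{2n}}$, whereas you pass to a continuous variable and reduce to convexity of $\log g$ via the perspective function $\alpha\mapsto\alpha\log g(t/\alpha)$ --- the very device the paper uses in its proof of Theorem \ref{thm:comp-mon'} --- which is equally valid provided you either note that $g$ is smooth on $(0,\infty)$ (dominated convergence, since $X_1^k e^{-sX_1}$ is bounded for $s>0$) or replace the computation of $\ell''$ by midpoint convexity of $\log g$ together with continuity.
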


\begin{theorem}\label{thm:comp-mon'}
Let $f\colon [0,\infty)\rightarrow [0,\infty)$ be such that $f(0) = 0$ and its derivative $f'$ is completely monotone. Let $X_1, X_2, \ldots$ be i.i.d. nonnegative random variables. Then the sequence $(a_n)_{n=1}^\infty$ defined by \eqref{eq:def-an} is log-concave.
\end{theorem}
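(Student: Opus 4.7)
The plan is to mimic the strategy of Theorem~\ref{thm:comp-mon} but apply Bernstein's theorem to $f'$ rather than $f$. Since $f'$ is completely monotone and $f(0) = 0$, integrating the Laplace representation of $f'$ yields the Bernstein representation
\[
f(x) = bx + \int_{(0,\infty)}\bigl(1 - e^{-tx}\bigr)\,d\sigma(t)
\]
for some $b \ge 0$ and nonnegative Borel measure $\sigma$ on $(0,\infty)$ with $\int \min(t,1)\,d\sigma(t) < \infty$. Writing $\phi(s) := \E e^{-sX_1}$, taking expectations gives
\[
a_n = b\,\E X_1 + \int_{(0,\infty)} \bigl(1 - \phi(t/n)^n\bigr)\,d\sigma(t).
\]

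Because $\sigma$ may be infinite, I would first truncate to $\sigma_R := \sigma|_{[1/R,R]}$ (finite for each $R > 0$) and define $a_n^{(R)}$ to be the corresponding average; dominated convergence gives $a_n^{(R)} \to a_n$ for each $n$, and log-concavity is preserved under pointwise limits, so it suffices to prove log-concavity of each sequence $(a_n^{(R)})_n$. Set $C := b\,\E X_1 + \sigma_R((0,\infty))$ and $b_n := \int \phi(t/n)^n\,d\sigma_R(t)$, so that $a_n^{(R)} = C - b_n$. Here $b_n = \E g((X_1+\dots+X_n)/n)$ for $g(x) := \int e^{-tx}\,d\sigma_R(t)$, which is completely monotone as the Laplace transform of the finite measure $\sigma_R$; Theorem~\ref{thm:comp-mon} therefore yields log-convexity of $(b_n)$. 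The sequence is also positive, decreasing in $n$ (since $\phi(t/n)^n = \E e^{-t(X_1+\dots+X_n)/n}$ decreases in $n$ by convexity of $x \mapsto e^{-tx}$ and the identity in the introduction), bounded above by $\sigma_R((0,\infty)) \le C$, and convex (log-convex combined with positivity yields $b_n \le \sqrt{b_{n-1}b_{n+1}} \le (b_{n-1}+b_{n+1})/2$ by AM--GM).

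The remaining step is a one-line algebraic identity. Setting $p := b_{n-1} - b_n$ and $q := b_n - b_{n+1}$, one checks $b_{n-1} b_{n+1} - b_n^2 = (p - q) b_n - pq$, and hence
\[
(a_n^{(R)})^2 - a_{n-1}^{(R)}a_{n+1}^{(R)} = C(p - q) - \bigl((p-q) b_n - pq\bigr) = (C - b_n)(p-q) + pq.
\]
Each of the three factors $C - b_n$, $p - q$, and $pq$ is nonnegative by the properties above (in particular $p \ge q$ from convexity of $b_n$), so $(a_n^{(R)})^2 \ge a_{n-1}^{(R)} a_{n+1}^{(R)}$; passing $R \to \infty$ completes the proof.

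The main obstacle is conceptual: unlike log-convexity, log-concavity is \emph{not} preserved under nonnegative linear combinations, so the clean Cauchy--Schwarz approach behind Theorem~\ref{thm:comp-mon} fails outright when the direction of the inequality is reversed. The key insight is that the Bernstein representation splits $a_n$ as a constant minus a log-convex piece, and the short identity above captures the slightly surprising rigidity that subtracting any log-convex decreasing positive sequence from a sufficiently large constant yields a log-concave sequence---effectively reducing the hard problem to the easy one already handled by Theorem~\ref{thm:comp-mon}.
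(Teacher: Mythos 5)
Your proof is correct, but it takes a genuinely different route from the paper's. Both arguments start from the same representation $f(y) = by + \int (1-e^{-ty})\,\dd\sigma(t)$, but they diverge afterwards. The paper keeps everything pointwise under the integral: writing $G(\alpha,t) = 1 - F(t/\alpha)^\alpha$ with $F(t)=\E e^{-tX_1}$, it reduces $a_n^2 \ge a_{n-1}a_{n+1}$ to the two-variable inequality $G(n,s)G(n,t) \ge \tfrac12 G(n-1,s)G(n+1,t) + \tfrac12 G(n+1,s)G(n-1,t)$, which it deduces from concavity of $\alpha\mapsto G(\alpha,t)$ (proved via the perspective function of the convex $t\mapsto\log F(t)$) together with monotonicity of $\alpha \mapsto G(\alpha,t)$; no truncation is ever needed because the constant $1$ lives inside the integrand. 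You instead integrate first and isolate a clean one-dimensional lemma: if $(b_n)$ is nonnegative, nonincreasing, convex and bounded above by $C$, then $(C-b_n)$ is log-concave, as your identity $(C-b_n)^2-(C-b_{n-1})(C-b_{n+1}) = (C-b_n)(p-q)+pq$ shows (note that you only use convexity of $(b_n)$, which you extract from log-convexity via AM--GM, so the appeal to Theorem~\ref{thm:comp-mon} is slightly stronger than necessary --- the elementary monotonicity of $n\mapsto \|e^{-tX}\|_{1/n}$ already gives convexity pointwise). The price of globalizing is the truncation to $\sigma_R$ to make $C$ finite, plus the (correct) observations that log-concavity survives pointwise limits and that Theorem~\ref{thm:comp-mon}'s proof applies verbatim to nonnegative rather than strictly positive $X_i$ when $g$ is the Laplace transform of a finite measure. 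What your version buys is a reusable, purely sequential lemma and an explicit reduction of the log-concave case to the log-convex one; what the paper's version buys is the avoidance of any finiteness or truncation issues and a symmetric two-variable mechanism that exposes exactly which structural facts (monotonicity and concavity in $\alpha$) drive the result.
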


In particular, applying these to the functions $f(x) = x^p$ with $p < 0$ and $0 < p < 1$ respectively, we obtain the following corollary.

\begin{corollary}\label{cor:p<1}
Let $X_1, X_2, \ldots$ be i.i.d. positive random variables. The sequence
\begin{equation}\label{eq:seq^p}
b_n = \E\left(\frac{X_1+\dots+X_n}{n}\right)^p, \qquad n=1,2,\ldots
\end{equation}
is log-convex when $p < 0$ and log-concave when $0 < p < 1$.
\end{corollary}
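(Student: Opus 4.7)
The plan is to obtain the corollary as a direct application of Theorem~\ref{thm:comp-mon} and Theorem~\ref{thm:comp-mon'} to the function $f(x) = x^p$, handling the two ranges of $p$ separately. The only real content is checking the complete-monotonicity hypotheses, so there is no substantial obstacle; the main task is a standard sign computation for the derivatives of a power function.

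For the case $p < 0$, I would invoke Theorem~\ref{thm:comp-mon} with $f(x) = x^p$ on $(0,\infty)$. To verify complete monotonicity, compute $f^{(n)}(x) = p(p-1)\cdots(p-n+1)\,x^{p-n}$ and observe that when $p<0$ each of the $n$ factors $p-k$ (for $k=0,\dots,n-1$) is negative, so their product has sign $(-1)^n$ and hence $(-1)^n f^{(n)}(x) \geq 0$ for all $x>0$. Equivalently, one can cite the familiar Laplace representation
\[
x^p = \frac{1}{\Gamma(-p)} \int_0^\infty t^{-p-1} e^{-tx}\,\dd t, \qquad p<0,
\]
which displays $x^p$ as the Laplace transform of a nonnegative measure, as in \eqref{eq:Lap}. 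Either way, Theorem~\ref{thm:comp-mon} yields log-convexity of $(b_n)$.

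For the case $0<p<1$, I would apply Theorem~\ref{thm:comp-mon'} to $f(x)=x^p$ on $[0,\infty)$. The boundary condition $f(0)=0$ is immediate since $p>0$. The remaining hypothesis is that $f'(x) = p\,x^{p-1}$ is completely monotone on $(0,\infty)$, and this reduces to the previous case: since $p-1<0$, the function $x^{p-1}$ is completely monotone by the same derivative-sign argument, and multiplying by the positive constant $p$ preserves complete monotonicity. Theorem~\ref{thm:comp-mon'} then gives log-concavity of $(b_n)$, completing the corollary.
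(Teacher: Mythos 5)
Your proposal is correct and matches the paper exactly: the corollary is stated there as an immediate consequence of Theorems \ref{thm:comp-mon} and \ref{thm:comp-mon'} applied to $f(x)=x^p$ for $p<0$ and $0<p<1$ respectively. Your explicit verification of the complete-monotonicity hypotheses (via the derivative sign computation or the Gamma-integral representation) is a routine check that the paper leaves implicit.
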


For $p > 1$, we pose the following conjecture.

\begin{conjecture}\label{conj:p>1}
Let $p > 1$. Let $X_1, X_2, \ldots$ be i.i.d. nonnegative random variables. Then the sequence $(b_n)$ defined in \eqref{eq:seq^p} is log-convex.
\end{conjecture}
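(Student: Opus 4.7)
My plan is to mirror the proof of Theorem \ref{thm:comp-mon} by using an integral representation of $x^p$ valid for $p > 1$. For non-integer $p \in (k, k+1)$ with integer $k \geq 1$, a standard Mellin-transform computation gives
\[
x^p = \frac{1}{|\Gamma(-p)|}\int_0^\infty H_k(tx)\, t^{-p-1}\, \dd t, \qquad H_k(y) = (-1)^{k+1}\bigl(e^{-y} - \sum_{j=0}^{k}(-y)^j/j!\bigr),
\]
with $H_k \geq 0$ on $[0,\infty)$; non-negativity follows by induction from $H_0(y) = 1 - e^{-y}$ together with the recursion $H_k(0) = 0$, $H_k' = H_{k-1}$. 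Setting $x = (X_1+\cdots+X_n)/n$ and taking expectation yields
\[
b_n = \frac{1}{|\Gamma(-p)|}\int_0^\infty g_n(t)\, t^{-p-1}\, \dd t, \qquad g_n(t) = \E\, H_k\bigl(t(X_1+\cdots+X_n)/n\bigr).
\]
Since log-convexity is preserved by positive integration, it would suffice to show $(g_n(t))_{n \geq 1}$ is log-convex for each fixed $t > 0$. Integer values of $p$ would be handled either by a limiting argument or by the combinatorial route described below.

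Writing $\phi(s) = \E e^{-sX}$, the integrand splits as
\[
g_n(t) = (-1)^{k+1}\phi(t/n)^n + \sum_{j=0}^{k} (-1)^{k+j}\frac{t^j}{j!}\, \E\bigl((X_1+\cdots+X_n)/n\bigr)^{j}.
\]
The exponential term $\phi(t/n)^n$ is precisely the object whose log-convexity in $n$ drives the proof of Theorem \ref{thm:comp-mon}, so that ingredient is in hand. The principal obstacle is that the exponential and polynomial pieces carry opposite signs, so closure of log-convexity under positive summation cannot be invoked directly; worse, the moment terms $\E((X_1+\cdots+X_n)/n)^j$ are not manifestly log-convex in $n$ for $j \geq 2$, and asserting their log-convexity for integer $j$ is itself a special case of the conjecture. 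A natural attempt to circumvent the sign issue is an induction on $k$ via $H_k(y) = \int_0^y H_{k-1}(u)\,\dd u$: after Fubini this represents $g_n(t)$ as $\int_0^\infty H_{k-1}(u)\, \p{(X_1+\cdots+X_n)/n > u/t}\,\dd u$, shifting the question to log-convexity in $n$ of the tail $\p{(X_1+\cdots+X_n)/n > v}$. Whether this tail is log-convex for each fixed $v$ is itself nontrivial and, I expect, is where the core difficulty lives.

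For integer $p$ there is a complementary combinatorial approach: expand
\[
\E(X_1+\cdots+X_n)^p = \sum_{\lambda \vdash p} C_\lambda\, n^{\underline{\ell(\lambda)}}\, m_\lambda, \qquad C_\lambda > 0,\ m_\lambda = \prod_i m_{\lambda_i},
\]
with $n^{\underline{r}} = n(n-1)\cdots(n-r+1)$ and $m_r = \E X^r$. The inequality $b_{n-1}b_{n+1} \geq b_n^2$ then reduces, after clearing denominators, to a polynomial inequality in $n$ whose coefficients are polynomials in $m_1,\dots,m_p$, to be certified on the cone of valid moment sequences. I expect this non-negativity check to be the real sticking point: the coefficients contain delicate cancellations, and the authors' partial result (log-convexity from the $(p^2+1)$st term onward for integer $p$) is consistent with the obstruction being concentrated at small $n$, where the subtractions built into the falling factorials $n^{\underline{r}}$ most strongly interfere with the positive contributions. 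Bridging this small-$n$ gap appears to require either an extension of the Boland--Proschan--Tong inequality~\eqref{eq:Boland} or an SOS/Schur-positivity-type certificate that I do not currently see.
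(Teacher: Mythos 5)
First, note that the statement you were asked to prove is Conjecture \ref{conj:p>1}: the paper does not prove it, and offers only the partial result Theorem \ref{thm:p>1} (integer $p$, $n\ge p^2$) together with the special cases in Remark \ref{rem:p23}. Your write-up, as you acknowledge, also does not prove it, so the relevant question is whether your two sketched routes could be completed. The Mellin route cannot be completed in the form you propose. The representation of $x^p$ via the kernels $H_k$ is fine, but the sufficient condition you reduce to --- log-convexity in $n$ of the tails $\p{\frac{X_1+\dots+X_n}{n} > v}$ for each fixed $v$ --- is false. If it held, then writing $b_n=\int_0^\infty p v^{p-1}\,\p{\frac{X_1+\dots+X_n}{n}>v}\,\dd v$ and using closure of log-convexity under positive integration would make $(b_n)$ log-convex for every $p>0$, including $p=1/2$; but Corollary \ref{cor:p<1} says $(b_n)$ is log-concave for $p=1/2$, and for i.i.d.\ Bernoulli$(1/2)$ variables one computes $b_1=1/2$, $b_2=\frac{1}{4}+\frac{\sqrt{2}}{4}\approx 0.6036$, $b_3\approx 0.6477$, so $b_2^2\approx 0.364 > 0.324\approx b_1b_3$ strictly. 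Hence no pointwise-in-$v$ statement can drive the argument; any successful version of this route must exploit the specific weights $H_{k-1}$, and the sign obstruction you identify in the decomposition of $g_n(t)$ is a symptom of the same problem, not a removable technicality.

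Your combinatorial branch for integer $p$ is, in outline, the paper's actual strategy for Theorem \ref{thm:p>1}, but the paper avoids the ``certify a polynomial inequality on the moment cone'' step that you rightly flag as the sticking point. After normalizing $\E X_1=1$ and subtracting the identity obtained by taking $X_1$ constant, the paper writes $b_n-1=\sum_{m=1}^{p-1}u_n^{(m)}c_m$ with $u_n^{(m)}=\frac{n!}{n^p(n-m)!}$ and coefficients $c_m=\sum_{q\in Q_m}\beta(q)(\mu(q)-1)\ge 0$ (the only moment inequality used is $\mu(q)\ge1$, from monotonicity of moments); log-convexity then follows for $n\ge p^2$ from Lemma \ref{lm:seq}, which shows each $u_n^{(m)}$ is log-convex on that range. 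The restriction $n\ge p^2$ is not an artifact of sloppiness: the sequences $u_n^{(m)}$ genuinely fail to be log-convex for small $n$, so this decomposition cannot yield the full conjecture, and closing the small-$n$ gap requires a different grouping of terms (as in Remark \ref{rem:p23} for $p=3$) or the pointwise majorization inequality proposed in the paper's final remarks, which is itself only verified for $n=2$.
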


We offer a partial result supporting this conjecture.

\begin{theorem}\label{thm:p>1}
Let $X_1, X_2, \ldots$ be i.i.d. nonnegative random variables, let $p$ be a positive integer and let $b_n$ be defined by \eqref{eq:seq^p}. Then for every $n \geq p^2$, we have
$b_n^2 \leq b_{n-1}b_{n+1}$.
\end{theorem}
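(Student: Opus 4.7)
My plan is to reduce the log-convexity to a polynomial inequality in $n$ and then carry out an asymptotic $1/n$-expansion analysis.

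Since $p$ is a positive integer and the $X_i$ are i.i.d., the moment $T_n:=\E S_n^p$ is a polynomial of degree $p$ in $n$. Expanding $(X_1+\cdots+X_n)^p$ and grouping by the set-partition structure of the index tuples gives
\[
T_n=\sum_{k=1}^p d_k\,n^{(k)},\qquad n^{(k)}:=n(n-1)\cdots(n-k+1),
\]
where each $d_k\ge0$ is a sum over set partitions of $[p]$ into $k$ blocks, weighted by $\prod_{B}\mu_{|B|}$ (writing $\mu_j=\E X^j$). In particular $d_p=\mu_1^p$ and $d_{p-1}=\binom{p}{2}\mu_1^{p-2}\mu_2$.

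The desired inequality $b_n^2\le b_{n-1}b_{n+1}$ is equivalent to $\Delta(n)\ge0$, where
\[
\Delta(n):=T_{n-1}\,T_{n+1}\,n^{2p}-T_n^2\,(n^2-1)^p.
\]
I would substitute the falling-factorial expansion and use the discrete-derivative identities $(n+1)^{(k)}-n^{(k)}=k\,n^{(k-1)}$ and $n^{(k)}-(n-1)^{(k)}=k\,(n-1)^{(k-1)}$ to rewrite $\Delta(n)$ as an explicit bilinear form $\sum_{j,k}d_jd_k\,A_{j,k}(n)$ in the coefficients $d_k$, with polynomial-in-$n$ weights $A_{j,k}(n)$. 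A leading-order computation reveals that the top contribution to $\Delta$ is
\[
p(p-1)\,\mu_1^{2p-2}\,(\mu_2-\mu_1^2)\,n^{4p-3}+O(n^{4p-4}),
\]
which is always nonnegative because $\mu_2\ge\mu_1^2$. The remaining work is to show that the $O(n^{4p-4})$ corrections cannot spoil positivity once $n\ge p^2$; the threshold $p^2$ should emerge precisely from balancing the main positive term (of combinatorial size $p(p-1)$) against the sub-leading cross terms (whose combinatorial factors are of size $\binom{p}{j}$ for various $j$).

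\textbf{Main obstacle.} A direct term-by-term comparison of the bilinear form already fails on the diagonal pair $j=k=p$: the ratio $(n-1)^{(p)}(n+1)^{(p)}/(n^{(p)})^2=(n-p)(n+1)/(n(n-p+1))$ is strictly smaller than $(1-1/n^2)^p$ even at $n=p^2$. Hence the positive contribution from the cross pair $(p,p-1)$, of size $\asymp p(p-1)\,d_pd_{p-1}$, must be used to absorb both this "leading deficit" and additional potentially negative cross terms coming from pairs $(j,k)$ with $j+k<2p$. Carrying out this absorption — by identifying the right grouping of the bilinear form so that its total becomes manifestly nonnegative for $n\ge p^2$, most likely by exploiting Cauchy--Schwarz-type inequalities among the $d_k$'s (such as $d_k d_l$ bounded by the $d_j$ of the "merged" partition) — is the main technical hurdle.
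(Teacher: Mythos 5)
Your setup is the same as the paper's (the falling-factorial expansion $T_n=\sum_k d_k n^{(k)}$ with nonnegative partition-indexed coefficients, and the correct leading term $p(p-1)\mu_1^{2p-2}(\mu_2-\mu_1^2)n^{4p-3}$), and you have correctly diagnosed the real obstruction: the single sequence $n^{(p)}/n^p$ is \emph{not} log-convex, so a term-by-term treatment of the bilinear form fails on the $(p,p)$ diagonal. But you stop exactly there --- the "absorption" you describe is left as a hurdle, not carried out, and the Cauchy--Schwarz-type inequalities among the $d_k$'s that you propose are not the mechanism that makes this work. As written, the proposal is a plan with the decisive step missing, so it does not constitute a proof.

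The idea you are missing is that no bilinear-form analysis is needed at all: one can kill the offending $m=p$ term \emph{before} squaring anything. Normalize $\mu_1=1$ (by homogeneity) and subtract from \eqref{eq:ESn} the same identity evaluated at a constant random variable, i.e.\ the Stirling-number identity $n^p=\sum_{m=1}^p S(p,m)\,n^{(m)}$. Since the only partition in $Q_p$ is $\{1,\dots,1\}$ and $\mu(\{1,\dots,1\})=\mu_1^p=1$, the $m=p$ terms cancel exactly, leaving
\[
b_n \;=\; 1+\sum_{m=1}^{p-1}c_m\,\frac{n^{(m)}}{n^p},\qquad c_m=\sum_{q\in Q_m}\beta(q)\bigl(\mu(q)-1\bigr)\;\ge\;0,
\]
where the nonnegativity of $c_m$ follows from monotonicity of moments ($\mu(q)\ge 1$ when $\mu_1=1$). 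The constant sequence is log-convex, and each $n^{(m)}/n^p$ with $m\le p-1$ is log-convex for $n\ge p^2$ (shown in the paper by checking that $x\mapsto\log\bigl(x(x-1)\cdots(x-m+1)/x^p\bigr)$ has positive second derivative on $[p^2-1,\infty)$, which is an elementary single-variable estimate); a sum of log-convex sequences is log-convex, and the threshold $p^2$ comes out of that calculus lemma rather than from balancing combinatorial factors in a bilinear form. If you want to salvage your route, you would have to prove quantitative lower bounds on the cross terms $d_pd_{p-1}A_{p,p-1}(n)$ that dominate the $(p,p)$ deficit uniformly over all admissible moment sequences --- which is exactly what the subtraction trick sidesteps.
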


\begin{remark}\label{rem:p23}
When $p = 2$, we have $b_n = \frac{n\E X_1^2  + n(n-1)(\E X_1)^2}{n^2} = (\E X_1)^2 + n^{-1}\Var(X_1)$, which is clearly a log-convex sequence (as a sum of two log-convex sequences). 
The following argument for $p=3$ was kindly communicated to us by Krzysztof Oleszkiewicz: 
when $p=3$, we can write
\[
b_n = (\E X_1)^3 + \big(\E X_1^3 + (\E X_1)^3 - 2(\E X_1^2)(\E X_1)\big)n^{-2} + (\E X_1)\Var(X_1)(3n^{-1}-n^{-2}).
\]
The sequences $(n^{-2})$ and $(3n^{-1}-n^{-2})$ are log-convex. By the Cauchy-Schwarz inequality the factor at $n^{-2}$ is nonnegative,
\[
\E X_1^2 \leq \sqrt{\E X_1^3}\sqrt{\E X_1} \leq \frac{\E X_1^3}{2\E X_1} + \frac{(\E X_1)^2}{2},
\]
so again $(b_n)$ is log-convex as a sum of three log-convex sequences. 
It remains elusive how to group terms and proceed along these lines in general. Our proof of Theorem \ref{thm:p>1} relies on this idea, but uses a  straightforward way of rearranging terms.
\end{remark}

\begin{remark}\label{rem:Boland}
It would be tempting to use the aforementioned result of Boland et al. with $\phi(x,y) = (xy)^p$ to resolve Conjecture \ref{conj:p>1}. However, this function is neither convex nor concave on $(0,+\infty)^2$ for $p > \frac{1}{2}$. For $0 < p < \frac{1}{2}$, the function is concave and \eqref{eq:Boland} yields $(b_nn^p)^2 \geq b_{n-1}(n-1)^pb_{n+1}(n+1)^p$, $n \geq 2$, equivalently, $b_n^2 \geq \left(\frac{n^2-1}{n^2}\right)^pb_{n-1}b_{n+1}$. Corollary~\ref{cor:p<1} improves on this by removing the factor $\left(\frac{n^2-1}{n^2}\right)^p < 1$. For $p < 0$, $\phi$ is convex, so \eqref{eq:Boland} gives $b_n^2 \leq \left(\frac{n^2-1}{n^2}\right)^pb_{n-1}b_{n+1}$ and Corollary \ref{cor:p<1} removes the factor $\left(\frac{n^2-1}{n^2}\right)^p > 1$.
\end{remark}

Concluding this introduction, it is of significant interest to study the log-behaviour of various sequences, particularly those emerging from algebraic, combinatorial, or geometric structures, which has involved and prompted the development of many deep and interesting methods, often useful beyond the original problems (see, e.g., 
\cite{Brenti, D, DD, Fad, NO, Seg, Stanley, Stanley2}
). We propose to consider sequences of moments of averages of i.i.d. random variables arising naturally in probabilistic limit theorems. For moments of order less than $1$, we employ an analytical approach exploiting integral representations for power functions. For moments of order higher than $1$, our Conjecture \ref{conj:p>1}, besides refining the monotonicity property of the sequence $(b_n)$ (resulting from convexity), would furnish new examples of log-convex sequences. For instance, neither does it seem trivial, nor handled by known techniques, to determine whether the sequence obtained by taking the Bernoulli distribution with parameter $\theta \in (0,1)$, $b_n = \sum_{k=0}^n \binom{n}{k}\left(\frac{k}{n}\right)^p\theta^k(1-\theta)^{n-k}$ is log-convex. In the case of integral $p$, we have $b_n = \sum_{k=0}^p S(p,k)\frac{n!}{(n-k)!n^p}\theta^k$, where $S(p,k)$ is the Stirling number of the second kind. 

The rest of this paper is occupied with the proofs of Theorems \ref{thm:comp-mon}, \ref{thm:comp-mon'}, \ref{thm:p>1} (in their order of statement) and then we conclude with additional remarks and conjectures.

\section{Proofs}

\subsection{Proof of Theorem \ref{thm:comp-mon}}

Suppose that $f$ is completely monotone.
Using \eqref{eq:Lap} and independence, we have
\[
a_n = \E f\left(\frac{X_1+\ldots+X_n}{n}\right) = \int_0^\infty \left[\E e^{-tX_1/n}\right]^n \dd\mu(t).\]
Let $u_n(t) = \left[\E e^{-tX_1/n}\right]^n$. It suffices to show that for every positive $t$, the sequence $(u_n(t))$ is log-convex (because sums/integrals of log-convex sequences are log-convex: the Cauchy-Schwarz inequality applied to the measure $\mu$ yields
\[
\left(\int \sqrt{u_{n-1}(t)u_{n+1}(t)} \dd\mu(t)\right)^2 \leq \left(\int u_{n-1}(t) \dd\mu(t)\right)\left(\int u_{n+1}(t) \dd\mu(t)\right),
\]
which combined with $u_n(t) \leq \sqrt{u_{n-1}(t)u_{n+1}(t)}$, gives $a_n^2 \leq a_{n-1}a_{n+1}$).
The log-convexity of $(u_n(t))$ follows from H\"older's inequality,
\[
\E e^{-tX_1/n} = \E e^{-\frac{n-1}{2n}\frac{tX_1}{n-1}}e^{-\frac{n+1}{2n}\frac{tX_1}{n+1}} \leq \left(\E e^{-\frac{tX_1}{n-1}}\right)^{\frac{n-1}{2n}}\left(\E e^{-\frac{tX_1}{n+1}}\right)^{\frac{n+1}{2n}},\]
which finishes the proof.

\subsection{Proof of Theorem \ref{thm:comp-mon'}}

Suppose now that $f(0) = 0$ and $f'$ is completely monotone, say $f'(x) = \int_0^\infty e^{-tx}\dd\mu(t)$ for some nonnegative Borel measure $\mu$ on $(0,\infty)$ (by \eqref{eq:Lap}). Introducing a new measure $\dd\nu(t) = \frac{1}{t}\dd\mu(t)$ we can write
\[
f(y) = f(y) - f(0) = \int_0^y f'(x) \dd x = \int_0^\infty\int_0^\infty te^{-tx}\1_{\{0<x<y\}}\dd x\dd\nu(t).\]
Integrating against $\dd x$ gives
\[
f(y) = \int_0^\infty \big[1-e^{-ty}\big]\dd\nu(t).\]
Let $F$ be the Laplace transform of $X_1$, that is
\[
F(t) = \E e^{-tX_1}, \qquad t > 0.
\]
Then
\[
\E f\left(\frac{X_1+\ldots+X_n}{n}\right) = \int_0^\infty \Big[ 1-F(t/n)^n \Big]\dd\nu(t) = \int_0^\infty G(n,t) \dd\nu(t),
\]
where, to shorten the notation, we introduce the following nonnegative function
\[
G(\alpha,t) = 1 - F(t/\alpha)^\alpha, \qquad \alpha, t > 0.
\]
To show the inequality
\[ 
\Big[\E f\left(\frac{X_1+\ldots+X_n}{n}\right) \Big]^2 \geq \E f\left(\frac{X_1+\ldots+X_{n-1}}{n-1}\right)\cdot\E f\left(\frac{X_1+\ldots+X_{n+1}}{n+1}\right)
\]
it suffices to show that pointwise
\[
G(n,s)G(n,t) \geq \frac{1}{2}G(n-1,s)G(n+1,t) + \frac{1}{2}G(n+1,s)G(n-1,t),\]
for all $s, t > 0$. This follows from two properties of the function $G$:

\noindent
1) for every fixed $t>0$ the function $\alpha \mapsto G(\alpha,t)$ is nondecreasing,

\noindent
2) the function $G(\alpha,t)$ is concave on $(0,\infty)\times (0,\infty)$.

Indeed, by 2) we have
\[
G(n,s)G(n,t)\geq \frac{G(n-1,s)+G(n+1,s)}{2}\cdot\frac{G(n-1,t)+G(n+1,t)}{2}\]
{\red (in fact we only use concavity in the first argument).} It thus suffices to prove that
\begin{align*}
& G(n-1,s)G(n-1,t)+G(n+1,s)G(n+1,t) \\
&\quad- G(n-1,s)G(n+1,t) - G(n+1,s)G(n-1,t) \\
&= \Big[G(n-1,s)-G(n+1,s)\Big]\cdot \Big[G(n-1,t)-G(n+1,t)\Big]
\end{align*}
is nonnegative, which follows by 1).

It remains to prove 1) and 2). To prove the former, first notice that $F(t/\alpha)^\alpha = \left(\E e^{-tX/\alpha}\right)^{\alpha} = {\red \|e^{-tX}\|_{1/\alpha}}$ {\red is the $L_{1/\alpha}$-norm of $e^{-tX}$. By convexity, for an arbitrary random variable $Z$, $p \mapsto (\E|Z|^p)^{1/p} = \|Z\|_p$ is nondecreasing, so $F(t/\alpha)^\alpha = \|e^{-tX}\|_{1/\alpha}$ is nonincreasing and thus $G(\alpha,t) = 1 - F(t/\alpha)^\alpha$ is nondecreasing.} To prove the latter, notice that by H\"older's inequality the function $t \mapsto \log F(t)$ is convex, 
\[
{\red F(\lambda s + (1-\lambda) t) = \E [(e^{-sX})^\lambda (e^{-tX})^{1-\lambda}] \leq (\E e^{-sX})^\lambda(\E e^{-tX})^{1-\lambda} = F(s)^\lambda F(t)^{1-\lambda}}.
\] 
Therefore its perspective function $H(\alpha,t) = \alpha \log F(t/\alpha)$ is convex (see, e.g. Ch. 3.2.6 in \cite{BV}), which implies that $F(t/\alpha)^{\alpha} = e^{H(\alpha,t)}$ is also convex.

\subsection{Proof of Theorem \ref{thm:p>1}}

We recall a standard combinatorial formula: first by the multinomial theorem and independence, we have
\[
\E (X_1+\dots+X_n)^p = \sum \frac{p!}{p_1!\cdots p_n!}\E (X_1^{p_1}\cdot\ldots\cdot X_{n}^{p_n}) = \sum \frac{p!}{p_1!\cdots p_n!} \mu_{p_1}\cdot\ldots\cdot\mu_{p_n},
\]
where the sum is over all sequences $(p_1,\dots,p_n)$ of nonnegative integers such that $p_1+\dots+p_n = p$ and we denote $\mu_k = \E X_1^k$, $k \geq 0$. Now we partition the summation according to the number $m$ of positive terms in the sequence $(p_1,\ldots,p_n)$. {\red Let $Q_m$ be the set of integer partitions of $p$ into exactly $m$ (nonempty) parts, that is the set of $m$-element multisets $q = \{q_1,\dots, q_m\}$ with positive integers $q_j$ summing to $p$, $q_1+\dots + q_m = p$. Then} 
\[
\E (X_1+\dots+X_n)^p = \sum_{m=1}^p\sum_{q \in Q_m}\frac{p!}{q_1!\cdots q_m!}\frac{n!}{\alpha(q)\cdot(n-m)!}\mu_{q_1}\cdots\mu_{q_m},
\]
where $\alpha(q) = l_1!\cdots l_h!$ for $q = \{q_1,\ldots,q_m\}$ with exactly $h$ distinct terms such that there are $l_1$ terms of type $1$, $l_2$ terms of type $2$, etc., so $l_1+\dots+l_h = m$ (e.g. for $q = \{2,2,2,3,4,4\} \in Q_6$, we have $h=3$, $l_1 = 3$, $l_2 = 1$, $l_3=2$). The factor $\frac{n!}{\alpha(q)\cdot(n-m)!}$ arises because given a {\red multiset} $q \in Q_m$, there are exactly
\begin{align*}
\binom{n}{l_1}\binom{n-l_1}{l_2}&\binom{n-l_1-l_2}{l_3}\dots\binom{n-l_1-\dots-l_{h-1}}{l_h} \\
&= \frac{n!}{l_1!\cdot\ldots\cdot l_h!\cdot (n-l_1-\ldots-l_h)!} = \frac{n!}{\alpha(q)\cdot (n-m)!}
\end{align*}
many nonnegative integer-valued sequences $(p_1,\ldots,p_n)$ with $\mu_{p_1}\cdots\mu_{p_n} = \mu_{q_1}\cdots\mu_{q_m}$ (equivalently, $\{p_1,\ldots,p_n\} = \{q_1,\ldots,q_m,0\}$, as sets). 

We have obtained
\begin{equation}\label{eq:ESn}
b_n = \E \left(\frac{X_1+\dots+X_n}{n}\right)^p = \sum_{m=1}^p \frac{n!}{n^p(n-m)!} \sum_{q \in Q_m}\beta(q){\red \mu(q)},
\end{equation}
where $\beta(q) = \frac{p!}{\alpha(q)\cdot q_1!\cdots q_m!}$ and $\mu(q) = \mu_{q_1}\cdots\mu_{q_m}$. By homogeneity, we can assume that $\mu_1 = \E X_1 = 1$. Note that when $X_1$ is constant, we get from \eqref{eq:ESn} that
\[
1 = \sum_{m=1}^p \frac{n!}{n^p(n-m)!} \sum_{q \in Q_m}\beta(q).
\]
Since $Q_p$ has only one element, namely $\{1,\ldots,1\}$ and $\mu(\{1,\ldots,1\}) = 1$, when we subtract the two equations, the terms corresponding to $m=p$ cancel and we get
\[
b_n - 1 = \sum_{m=1}^{p-1} \frac{n!}{n^p(n-m)!}\sum_{q \in Q_m} \beta(q)(\mu(q)-1).
\]
By the monotonicity of moments, $\mu(q) \geq 1$ for every $q$, so $(b_n)$ is a sum of the constant sequence $(1,1,\ldots)$ and the sequences $(u_n^{(m)}) = (\frac{n!}{n^p(n-m)!})$, $m=1,\ldots,p-1$, multiplied respectively by the nonnegative factors $\sum_{q \in Q_m} \beta(q)(\mu(q)-1)$. Since sums of log-convex sequences are log-convex, it remains to verify that for each $1 \leq m \leq p-1$, we have $(u_n^{(m)})^2 \leq u_{n-1}^{(m)}u_{n+1}^{(m)}$, $n \geq p^2$.  The following lemma finishes the proof.

\begin{lemma}\label{lm:seq}
Let $p \geq 2$, $1 \leq m \leq p-1$ be integers. Then the function
\[
f(x) = \log\frac{x(x-1)\cdots(x-m+1)}{x^p}
\]
is convex on $[p^2-1,\infty)$.
\end{lemma}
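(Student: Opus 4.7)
The plan is to compute the second derivative of $f$ directly and reduce convexity to a simple pointwise inequality. Writing
\[
f(x) = \sum_{k=0}^{m-1}\log(x-k) - p\log x,
\]
one obtains $f''(x) = \frac{p}{x^2} - \sum_{k=0}^{m-1}\frac{1}{(x-k)^2}$. Thus convexity on $[p^2-1,\infty)$ is equivalent to showing
\[
\sum_{k=0}^{m-1}\frac{1}{(x-k)^2} \leq \frac{p}{x^2} \qquad \text{for all } x \geq p^2 - 1.
\]

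To bound the sum, I would use that $t \mapsto 1/(x-t)^2$ is increasing on $[0,m]$ (which makes sense because $x \geq p^2 - 1 > p-1 \geq m$ for $p \geq 2$), and compare each term to an integral over the interval to its right:
\[
\sum_{k=0}^{m-1}\frac{1}{(x-k)^2} \leq \int_0^m \frac{\dd t}{(x-t)^2} = \frac{1}{x-m} - \frac{1}{x} = \frac{m}{x(x-m)}.
\]
It then suffices to verify $\frac{m}{x(x-m)} \leq \frac{p}{x^2}$, which rearranges to $x \geq \frac{pm}{p-m}$. Since $1 \leq m \leq p-1$, the right-hand side is largest at $m = p-1$, giving the threshold $p(p-1) = p^2 - p \leq p^2 - 1$. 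Hence $x \geq p^2 - 1$ does the job.

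The only non-routine step is the integral comparison, and after that everything is elementary algebra, so I do not anticipate a serious obstacle. It is worth noting that the bound is essentially tight for this purpose: at the endpoint $x = p^2 - 1$ with the worst choice $m = p-1$, the simplified inequality reduces to $p^2 - 1 \leq p^2$, so the threshold $p^2 - 1$ in the lemma is very close to what this method can yield. A cruder estimate such as bounding each term by the largest one, $m/(x-m+1)^2$, would require $x \gtrsim 2p^2$ and would not suffice, which is why the integral comparison (rather than the maximum) is the right tool here.
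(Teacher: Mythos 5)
Your proof is correct and takes essentially the same approach as the paper: compute $f''$ and bound $\sum_k (x-k)^{-2}$ by a telescoping estimate (your integral comparison $\int_k^{k+1}\frac{\dd t}{(x-t)^2}=\frac{1}{x-k-1}-\frac{1}{x-k}$ is exactly the bound $\frac{1}{j^2}\le\frac{1}{j-1}-\frac{1}{j}$ used there). The only difference is that you carry general $m$ and $x$ through the algebra instead of reducing to the worst case $m=p-1$, $x=p^2-1$, which incidentally shows convexity already on $[p^2-p,\infty)$.
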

\begin{proof}
The statement is clear for $m=1$. Let $2 \leq m \leq p-1$ and $p \geq 3$. We have
\[
x^2f''(x) = p-1 - x^2\sum_{k=1}^{m-1} \frac{1}{(x-k)^2}.
\]
To see that this is positive for every $x \geq p^2-1$ and $2 \leq m \leq p-1$, it suffices to consider $m = p-1$ and $x = p^2-1$ (writing $\frac{x}{x-k} = 1 + \frac{k}{x-k}$, we see that the right hand side is increasing in $x$). Since
\begin{align*}
\sum_{k=1}^{p-2} \frac{1}{(p^2-1-k)^2} = \sum_{k=p^2-p+1}^{p^2-2} \frac{1}{k^2} \leq \sum_{k=p^2-p+1}^{p^2-2} \left(\frac{1}{k-1} - \frac{1}{k}\right) &= \frac{1}{p^2-p} - \frac{1}{p^2-2},
\end{align*}
we have
\[
x^2f''(x) \geq p-1-(p^2-1)^2\left(\frac{1}{p^2-p} - \frac{1}{p^2-2}\right) = \frac{(p-1)(p+2)}{p(p^2-2)},
\]
which is clearly positive.
\end{proof}

\section{Final remarks}

\begin{remark}
Using majorization type arguments (see, e.g. \cite{MO}), Conjecture \ref{conj:p>1} can be verified in a rather standard but lengthy way for every $p>1$ and $n=2$. The idea is to establish a pointwise inequality: we conjecture that for nonnegative numbers $x_1,\ldots,x_{2n}$ and a convex function  $\phi: [0,\infty)\to[0,\infty)$ we have
\[ 
\frac{1}{\binom{2n}{n}}\sum_{|I|=n}\phi\left(\frac{x_Ix_{I^c}}{n^2}\right) \leq \frac{1}{\binom{2n}{n+1}}\sum_{|I|=n+1}\phi\left(\frac{x_Ix_{I^c}}{n^2-1}\right),\]
where for a subset $I$ of the set $\{1,\ldots,2n\}$ we denote $x_I = \sum_{i\in I} x_i$. We checked that this holds for $n=2$. Taking the expectation on both sides for $\phi(x) = x^p$ gives the desired result that $b_n^2 \leq b_{n-1}b_{n+1}$.
\end{remark}

\begin{remark}
It is tempting to ask for generalisations of Conjecture \ref{conj:p>1} beyond the power functions, say to ask whether the sequence $(a_n)$ defined in \eqref{eq:def-an} is log-convex for every convex function $f$. This is false, as can be seen by taking the function $f$ of the form $f(x) = \max\{x-a,0\}$ and the $X_i$ to be i.i.d Bernoulli random variables. 
\end{remark}

\paragraph{Acknowledgments.} We thank Krzysztof Oleszkiewicz for his great help, valuable correspondence, feedback and for allowing us to use his slick proof for $p=3$ from Remark 5. We also thank Marta Strzelecka and and Micha\l\ Strzelecki for the helpful discussions. {\red We are indebted to the referees for their careful reading and useful comments.}

We are grateful for the excellent working conditions provided by the Summer Undergraduate Research Fellowship 2019 at Carnegie Mellon University during which this work was initiated.

\end{document}